\newtheorem{theorem}{Theorem}[section]
\newtheorem{lemma}[theorem]{Lemma}
\newtheorem{corollary}[theorem]{Corollary}
\numberwithin{equation}{section}
\begin{document}
\title[Fourier transform in weighted r.i. spaces]{The Fourier transform in weighted rearrangement invariant spaces}
\author{Mieczys{\l}aw Masty{\l}o}
\address{Faculty of Mathematics and Computer Science,
Adam Mickiewicz University;
Uniwersytetu Pozna\'nskiego 4, 61-614 Pozna\'n, Poland}
\email{mieczyslaw.mastylo@amu.edu.pl}
\thanks{The first author was supported by the National Science Centre, Poland, Project no. 2019/33/B/ST1/00165}

\author{Gord Sinnamon}
\address{Department of Mathematics,
Western University,
London, Canada}
\email{sinnamon@uwo.ca}
\thanks{The second author was supported by the Natural Sciences and Engineering Research Council of Canada}

\keywords{Fourier inequality, rearrangement invariant space, weight, Hausdorff-Young inequality, Schr\"odinger equation}
\subjclass[2020]{Primary 35B30, Secondary 42B15}

\begin{abstract} It is shown that if the Fourier transform is a bounded map on a rearrangement-invariant space of functions on $\mathbb R^n$, modified by a weight, then the weight is bounded above and below and the space is equivalent to $L^2$. Also, if it is bounded from $L^p$ to $L^q$, each modified by the same weight, then the weight is bounded above and below and $1\le p=q'\le 2$. Applications prove the non-boundedness on these spaces of an operator related to the Schr\"odinger equation.
\end{abstract}
\maketitle

\section{Introduction} Plancherel's theorem and the Hausdorff-Young inequality show that if $1\le p\le2$ and $1/p + 1/q =1$, then there exists a $C$ such that for all $f\in L^1\cap L^p$,
\begin{equation}\label{HY}
\|\widehat f\|_{L^q(\mathbb R^n)}\le C\|f\|_{L^p(\mathbb R^n)}.
\end{equation}
It is well known that these are the only $(L^p, L^q)$ pairs between which the Fourier transform  is a bounded map. In addition, Theorem 1(ii) of \cite{BC} shows that if $X$ is a rearrangement invariant space of functions, then the Fourier transform is bounded on $X$ if and only if $X=L^2$, with equivalent norms.

In this paper we modify these spaces by introducing a weight function and show that the Fourier transform is bounded only if the weights are bounded above and below, reducing both problems to their respective unweighted cases. This provides a much larger class of spaces on which the Hausdorff-Young inequalities in \eqref{HY} are effectively the only possible Fourier norm inequalities. See Theorems \ref{noweight} and \ref{onlyHY}.

Following Lemma 8 of \cite{BC} we apply these results to identify a large class of spaces on which the Schr\"odinger multiplier $\exp(-4\pi^2it|y|^2)$ does not give rise to a bounded convolution operator. See Corollaries \ref{noSchr} and \ref{noHYSchr}.

If $f\colon\mathbb R^n\to\mathbb C$ is integrable, then its Fourier transform,
\[
\widehat f(x)=\int_{\mathbb R^n}e^{-2\pi ix\cdot t}f(t)\,dt,
\]
is a bounded continuous function. We restrict our attention throughout to integrable functions and avoid unnecessary extensions of the Fourier transform.

Modulation and translation operators $\varepsilon_z$ and $\tau_z$ are defined for $z\in\mathbb R^n$ by setting 
\[
\varepsilon_zf(x)=e^{-2\pi ix\cdot z}f(x)\quad\text{ and}\quad \tau_zf(x)=f(x+z), \quad\text{for }x\in\mathbb R^n.
\]
Observe that if $f$ is integrable, then $\tau_z\widehat f=(\varepsilon_z f)\,\widehat{}\,$ and $(\tau_zf)\,\widehat{}\,=\varepsilon_{-z}\widehat f$, for $z\in\mathbb R^n$.

For each $r>0$, let $Q_r=(-r/2,r/2)^n$ denote the cube in $\mathbb R^n$ with centre zero and side length $r$. Let $q_r=\chi_{Q_r}$ denote its characteristic function. Notice that if $y=(y_1,\dots,y_n)\in Q_{1/(2r)}$, then for each $j$, $-\pi/4\le\pi ry_j\le\pi/4$ so
\[
\Big|\frac{\sin(\pi r y_j)}{\pi ry_j}\Big|\ge\frac{\sin(\pi/4)}{\pi/4}\ge\frac12.
\]
Thus, for all $y,z\in\mathbb R^n$,
\begin{equation}\label{test}
|(\tau_z q_r)\,\widehat{}\,(y)|=\Big|e^{2\pi iy\cdot z}\prod_{j=1}^n\frac{\sin(\pi ry_j)}{\pi y_j}\Big|\ge \frac{r^n}{2^n}q_{1/(2r)}(y).
\end{equation}

Let $X$ be a rearrangement invariant space of complex valued functions on $\mathbb R^n$ and let $\|\cdot\|_X$ denote its norm. Directly from the definitions \cite{BS}*{I.1.1 and I.4.1}, we find that $X$ is a Banach space satisfying,
\begin{enumerate}[label=(\alph*)]
\item all characteristic functions of sets of finite measure are in $X$;
\item if $g\in X$ and $|f|\le |g|$ almost everywhere, then $f\in X$ and $\|f\|_X\le\|g\|_X$;
\item if $0\le f_k\in X$ for each $k$ and $f_k$ increases pointwise almost everywhere to $f$ as $k\to\infty$, then $f\in X$ and $\|f_k\|_X\to\|f\|_X$ as $k\to\infty$; and
\item\label{ri} if $g\in X$ and $f$ and $g$ are equimeasurable, that is, for all $\alpha>0$, the sets $\{x\in\mathbb R^n\colon|f(x)|>\alpha\}$ and $\{x\in\mathbb R^n\colon|g(x)|>\alpha\}$ have the same (Lebesgue) measure, then $f\in X$ and $\|f\|_X=\|g\|_X$.
\end{enumerate}
The associate space $X'$ of $X$ is defined to be the set of measurable $g$ such that
\[
\|g\|_{X'}=\sup\bigg\{\int_{\mathbb R^n} |fg|\colon\|f\|_X\le 1\bigg\}
\]
is finite. It is also a rearrangement invariant space and $(X')'=X$, with identical norms, see \cite{BS}*{I.2.2, I.4.2, I.2.7}. From \cite{BS}*{III.2.12 and II.5.2} we see that,
\begin{enumerate}[resume,label=(\alph*)]
\item if $T$ is a sublinear operator on $L^1+L^\infty$ that maps $L^1$ to $L^1$ and $L^\infty$ to $L^\infty$ such that $\|Tf\|_{L^1}\le \|f\|_{L^1}$ for all $f\in L^1$ and $\|Tf\|_{L^\infty}\le \|f\|_{L^\infty}$ for all $f\in L^\infty$, then $T$ maps $X$ to $X$ and $\|Tf\|_X\le \|f\|_X$ for all $f\in X$;
\item\label{fundfn} if $r>0$, then $r^n=\|q_r\|_X\|q_r\|_{X'}$; and 
\item\label{trans} if $f\in X$ and $z\in\mathbb R^n$, then $\tau_zf\in X$ and $\|\tau_zf\|_X=\|f\|_X$.
\end{enumerate}
The last statement follows readily from the translation invariance of Lebesgue measure and \ref{ri} above.

\section{Main results}

Let $\mathcal A=\{\alpha\colon\mathbb R^n\to (0,\infty):\int_{\mathbb R^n}\alpha=1\}$. These are the functions we will use to smooth weight functions by convolution. We begin with a general duality result, which we only need for rearrangement invariant spaces.

\begin{lemma}\label{duality}  Let $P$ and $Q$ be rearrangement invariant spaces of complex valued functions on $\mathbb R^n$, $U$ and $V$ be strictly positive measurable functions  and $C>0$. Suppose that if $f$ is integrable and $Uf\in P$, then $V\widehat f\in Q'$ and 
\[
\|V\widehat f\|_{Q'}\le C\|Uf\|_P.
\]  
Then, for all integrable $g$ such that $g/V\in Q$, we have $\widehat g/U\in P'$ and
\[
\|\widehat g/U\|_{P'}\le C\|g/V\|_Q.
\]
\end{lemma}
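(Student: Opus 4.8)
The plan is to run the associate-space duality argument, transferring the hypothesis through the multiplication formula $\int\widehat g\,f=\int g\,\widehat f$ (valid for integrable $f,g$ by Fubini, since $|e^{-2\pi ix\cdot t}|=1$). Directly from the definition of the associate norm of $P'$,
\[
\|\widehat g/U\|_{P'}=\sup\Big\{\int_{\mathbb R^n}|\widehat g|\,|\phi/U|:\|\phi\|_P\le1\Big\},
\]
so it suffices to fix $\phi$ with $\|\phi\|_P\le1$ and prove $\int_{\mathbb R^n}|\widehat g|\,|\phi/U|\le C\|g/V\|_Q$.

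First I would record that every element of $P$ is locally integrable: since property (a) applied to the rearrangement invariant space $P'$ puts characteristic functions of finite-measure sets into $P'$, Hölder's inequality for the pair $(P,P')$ gives $\int_B|\phi|\le\|\phi\|_P\|\chi_B\|_{P'}<\infty$ on bounded $B$. Consequently, on each set $E_k=\{x\in\mathbb R^n:|x|\le k,\ U(x)\ge1/k\}$ the function $\phi/U$ is integrable. I would then pick the phase $s=\overline{\widehat g}/|\widehat g|$ where $\widehat g\ne0$, with $s=0$ otherwise, and set $f_k=s\,\chi_{E_k}\,|\phi/U|$. This $f_k$ is integrable, satisfies $\widehat g\,f_k=\chi_{E_k}\,|\widehat g|\,|\phi/U|\ge0$, and obeys $|Uf_k|\le|\phi|$, so the solidity property (b) gives $Uf_k\in P$ with $\|Uf_k\|_P\le\|\phi\|_P\le1$.

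Now the hypothesis, applied to the integrable function $f_k$, yields $V\widehat{f_k}\in Q'$ and $\|V\widehat{f_k}\|_{Q'}\le C\|Uf_k\|_P\le C$. Combining the multiplication formula with Hölder's inequality for $(Q,Q')$ and the assumption $g/V\in Q$,
\[
\int_{E_k}|\widehat g|\,|\phi/U|=\int_{\mathbb R^n}\widehat g\,f_k=\int_{\mathbb R^n}g\,\widehat{f_k}=\int_{\mathbb R^n}(g/V)(V\widehat{f_k})\le\|g/V\|_Q\,\|V\widehat{f_k}\|_{Q'}\le C\|g/V\|_Q,
\]
where the inequality is legitimate because the leftmost quantity is real and nonnegative, so the complex integral equals its own modulus before Hölder is applied. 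Since $U>0$ everywhere, $E_k\uparrow\mathbb R^n$, and monotone convergence lets me send $k\to\infty$ to obtain $\int_{\mathbb R^n}|\widehat g|\,|\phi/U|\le C\|g/V\|_Q$. Taking the supremum over $\phi$ finishes the proof and in particular shows $\widehat g/U\in P'$.

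The main obstacle is the integrability demand built into the hypothesis: one cannot simply test with $\phi/U$, since it need not be integrable. Truncating to $E_k$, which cuts off both large $|x|$ and small values of $U$, is exactly what produces integrable test functions $f_k$ while preserving the pointwise bound $|Uf_k|\le|\phi|$, and the monotone convergence step restores the full integral at the end. A secondary point needing care is the unimodular-or-zero phase factor $s$, chosen so that $\widehat g\,f_k$ is genuinely nonnegative and the chain of equalities through the multiplication formula remains valid.
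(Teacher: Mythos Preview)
Your proof is correct and follows essentially the same approach as the paper's: both truncate the test function so that the quotient by $U$ becomes integrable, invoke the multiplication formula $\int\widehat g\,f=\int g\,\widehat f$, apply the hypothesis and H\"older for the pair $(Q,Q')$, and then pass to the limit via monotone convergence. The only difference is cosmetic---the paper truncates using $q_k$ and the set $\{|h|\le kU\}$ rather than your $E_k=\{|x|\le k,\ U\ge 1/k\}$---but the mechanism is identical.
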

\begin{proof} Suppose $g$ is integrable and $g/V\in Q$. Choose $h\in P$ with $\|h\|_P\le1$. For each  positive integer $k$, set $h_k(x)=q_k(x)|h(x)||\widehat g(x)|/\widehat g(x)$ when $h(x)\le k U(x)$ and $\widehat g(x)\ne0$. Set $h_k(x)=0$ otherwise.
Evidently, $|h_k|\le |h|$ so $\|h_k\|_P\le1$. Also, $h_k/U$ is integrable, so
\[
\int_{\mathbb R^n}\frac{|\widehat g|}U|h_k|=\int_{\mathbb R^n}\widehat g\frac{h_k}U=\int_{\mathbb R^n}g\Big(\frac{h_k}U\Big)\!\!\widehat{\phantom{\Big )}}\,\le\|g/V\|_Q\|V(h_k/U)\,\widehat{}\,\|_{Q'}\le C\|g/V\|_Q.
\]
Letting $k\to\infty$, the monotone convergence theorem implies
\[
\int_{\mathbb R^n}\frac{|\widehat g|}U|h|\le C\|g/V\|_Q<\infty.
\]
Therefore $\widehat g/U\in P'$ and $
\|\widehat g/U\|_{P'}\le C\|g/V\|_Q$.
\end{proof}

Next we show that if a weighted Fourier inequality holds, it also holds with a smoothed weight in the codomain.

\begin{theorem}\label{smoothing} Let $X$ and $Y$ be rearrangement invariant spaces of complex valued functions on $\mathbb R^n$ and let $\alpha\in\mathcal A$. Let $u$ and $v$ be non-negative, measurable functions on $\mathbb R^n$ such that $v$ is not almost everywhere zero. Suppose there exists a $C>0$ such that, if $f$ is integrable and $uf\in X$, then $v\widehat f\in Y$ and 
\[
\|v\widehat f\|_Y\le C\|uf\|_X.
\] 
Then $u>0$ almost everywhere; $v*q_1$ is bounded above; $0<v*\alpha<\infty$ almost everywhere; and, if $f$ is integrable and $uf\in X$, then $(v*\alpha)\widehat f\in Y$ and 
\[
\|(v*\alpha)\widehat f\|_Y\le C\|uf\|_X.
\] 
\end{theorem}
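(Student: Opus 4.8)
The plan is to prove the four conclusions in turn, the crucial preliminary being to isolate one fixed test function that is admissible for the hypothesis and whose transform stays bounded below on a unit cube. To see that $u>0$ almost everywhere, suppose instead that $u$ vanishes on a set of positive measure, pick a bounded subset $E'$ of positive measure, and take $f=\chi_{E'}$. Then $f$ is integrable with $uf=0\in X$, so the hypothesis forces $\|v\widehat f\|_Y=0$, whence $v\widehat f=0$ almost everywhere. But $\widehat f$ is analytic with $\widehat f(0)=|E'|>0$, so its zero set is null; hence $v=0$ almost everywhere, contradicting the standing assumption that $v$ is not almost everywhere zero.

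Next I would construct the test function. Since $u$ is finite almost everywhere, the truncations $g_N=q_{1/2}\,\chi_{\{u\le N\}}$ tend to $q_{1/2}$ in $L^1$, so $\widehat{g_N}\to\widehat{q_{1/2}}$ uniformly. As $|\widehat{q_{1/2}}|\ge 4^{-n}$ on $Q_1$ by \eqref{test}, a fixed large $N_0$ produces $g_0:=g_{N_0}$ with $|\widehat{g_0}|\ge\delta:=\frac12 4^{-n}$ throughout $Q_1$, while $ug_0\le N_0 q_{1/2}$ gives $ug_0\in X$ with $\|ug_0\|_X\le N_0\|q_{1/2}\|_X<\infty$.

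The crux, and the step I expect to be the main obstacle, is the uniform upper bound on $v*q_1$. For each $x\in\mathbb R^n$ set $f=\varepsilon_{-x}g_0$: this is integrable, $|uf|=ug_0$ so $uf\in X$ with $\|uf\|_X=\|ug_0\|_X$, and $|\widehat f(y)|=|\widehat{g_0}(y-x)|\ge\delta$ for $y\in x+Q_1$. Pairing $v\widehat f$ against $\chi_{x+Q_1}\in Y'$ (whose norm equals $\|q_1\|_{Y'}$) and invoking the hypothesis gives
\[
\delta\int_{x+Q_1}v\le\int_{x+Q_1}|v\widehat f|\le\|v\widehat f\|_Y\,\|q_1\|_{Y'}\le C\|ug_0\|_X\,\|q_1\|_{Y'},
\]
a bound independent of $x$, so $v*q_1$ is bounded above. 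The difficulty is precisely that the obvious choices $\varepsilon_{-x}q_{1/2}$ need not satisfy $uq_{1/2}\in X$; truncating $u$ at a single level $N_0$, chosen via $L^1$-continuity of the Fourier transform so that the spectral bump survives, restores admissibility while keeping the estimate uniform in the modulation parameter $x$.

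Finally I would average over translations. Replacing $f$ by $\varepsilon_z f$ and using $\widehat{\varepsilon_z f}=\tau_z\widehat f$ together with the translation invariance \ref{trans} of $Y$ yields $\|v(\cdot-z)\widehat f\|_Y\le C\|uf\|_X$ for every $z$ and every admissible $f$. Since $(v*\alpha)|\widehat f|=\int v(\cdot-z)|\widehat f|\,\alpha(z)\,dz$, the integral Minkowski inequality in $Y$ (obtained by testing against $k\in Y'$, interchanging the order of integration, and using $(Y')'=Y$) gives
\[
\|(v*\alpha)\widehat f\|_Y\le\int_{\mathbb R^n}\|v(\cdot-z)\widehat f\|_Y\,\alpha(z)\,dz\le C\|uf\|_X,
\]
which is both the asserted inequality and a proof that $(v*\alpha)\widehat f\in Y$. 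Applying this to $f=g_0$, for which $\widehat{g_0}\ne0$ almost everywhere, shows $(v*\alpha)|\widehat{g_0}|\in Y\subseteq L^1_{\mathrm{loc}}$, so $(v*\alpha)|\widehat{g_0}|<\infty$ and hence $v*\alpha<\infty$ almost everywhere; positivity $v*\alpha>0$ is immediate since $\alpha>0$ and $v$ is not almost everywhere zero.
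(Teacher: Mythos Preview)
Your proof is correct and follows the paper's overall strategy: both use modulation $f\mapsto\varepsilon_z f$ to translate $\widehat f$ while leaving $|uf|$ fixed, pair against characteristic functions in $Y'$ to extract local averages of $v$, and derive the smoothed inequality by testing against $g\in Y'$ and interchanging the order of integration. The differences are in technical execution. For the test function, the paper takes $f=\chi_F$ with $u$ bounded on $F$, uses continuity of $\widehat f$ to find some small cube $Q_\delta$ on which $|\widehat f|\ge\delta$, bounds $v*q_\delta$, and then covers $Q_1$ by finitely many translates of $Q_\delta$; you instead approximate $q_{1/2}$ in $L^1$ by $g_0=q_{1/2}\chi_{\{u\le N_0\}}$ so that $|\widehat{g_0}|$ stays bounded below on all of $Q_1$, getting $v*q_1$ bounded in one step. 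For $v*\alpha<\infty$, the paper uses the identity $(v*\alpha)*q_1=(v*q_1)*\alpha$, while you apply the smoothed inequality to $g_0$ and use $Y\subset L^1_{\mathrm{loc}}$ together with the real-analyticity of $\widehat{g_0}$ (and likewise of $\widehat{\chi_{E'}}$ in your $u>0$ step). Your route avoids the covering argument at the price of importing the Paley--Wiener fact that Fourier transforms of compactly supported functions have null zero sets; the paper's route needs only continuity of $\widehat f$.
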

\begin{proof} Suppose $f$ is integrable, $uf\in X$, $g\in Y'$ and $\|g\|_{Y'}\le1$. Interchanging the order of integration and replacing $y$ by $y+z$ produces the equation
\[
\int_{\mathbb R^n}\int_{\mathbb R^n}v(y-z)\alpha(z)\,dz|\widehat f(y)g(y)|\,dy
=\int_{\mathbb R^n}\int_{\mathbb R^n}|v(y)(\tau_z\widehat f)(y)(\tau_zg)(y)|\,dy\,\alpha(z)\,dz.
\]
Since $Y'$ is rearrangement invariant, $\|\tau_z g\|_{Y'}\le1$. Also, $\tau_z\widehat f=(\varepsilon_z f)\,\widehat{}\,$, so we have
\[
\int_{\mathbb R^n}v*\alpha(y)|\widehat f(y)g(y)|\,dy
\le \int_{\mathbb R^n}\|v\tau_z\widehat f\|_Y\alpha(z)\,dz=\int_{\mathbb R^n}\|v(\varepsilon_z f)\,\widehat{}\,\|_Y\alpha(z)\,dz.
\]
But  $|\varepsilon_z f|=|f|$, so
\[
\int_{\mathbb R^n}v*\alpha(y)|\widehat f(y)g(y)|\,dy
\le \int_{\mathbb R^n}C\|uf\|_X\alpha(z)\,dz=C\|uf\|_X<\infty.
\]
Therefore, $(v*\alpha)\widehat f\in Y$ and $\|(\alpha*v)\widehat f\|_Y\le C\|uf\|_X$.

Choose a set $F$ of finite, positive measure such that $u$ is bounded on $F$. If $u$ is zero on a set of positive measure, choose the set $F$ so that $u$ is zero on $F$. Set $f=\chi_F$. Then $f$ is bounded, integrable and not almost everywhere zero so its Fourier transform is continuous and not identically zero. Choose $a\in\mathbb R^n$ and $\delta>0$ so that if $a-y\in Q_\delta$, then $|\widehat f(y)|\ge\delta$. 
Then, for all $y,z\in\mathbb R^n$,
\[
\delta q_\delta(z-y)\le|\widehat f(a+y-z)|q_\delta(z-y)=|(\varepsilon_{a-z}f)\,\widehat{}\,(y)\,|q_\delta(z-y).
\]
The choice of $F$ ensures that $u\varepsilon_{a-z}f$ is bounded and integrable so $u\varepsilon_{a-z}f\in X$.
Therefore,
\[
\frac{\delta}{\|q_\delta\|_{Y'}} v*q_\delta(z)
\le\int_{\mathbb R^n}v(y)|(\varepsilon_{a-z}f)\,\widehat{}\,(y)|\frac{q_\delta(z-y)}{\|q_\delta\|_{Y'}}\,dy
\le\|v(\varepsilon_{a-z}f)\,\widehat{}\,\|_Y\le C\|uf\|_X.
\]
If $u$ were zero on a set of positive measure, the choice of $F$ would make the right-hand side zero and force $v$ to be almost-everywhere zero, contrary to hypothesis. Therefore $u>0$ almost everywhere. The strict positivity of $\alpha$ ensures that $v*\alpha>0$ almost everywhere.

Since $C\|uf\|_X$ finite and independent of $z$, $v*q_\delta$ is bounded above. It is a simple matter to cover $Q_1$ by finitely many translates of $Q_\delta$ to get
\[
q_1\le\sum_{j=1}^N\tau_{z_j}q_\delta,
\] 
for some finite sequence $z_1,\dots,z_N$, and hence
\[
v*q_1\le\sum_{j=1}^Nv*(\tau_{z_j}q_\delta)
=\sum_{j=1}^N\tau_{z_j}(v*q_\delta)
\]
which is bounded above. It follows that
\[
(v*\alpha)*q_1=(v*q_1)*\alpha
\]
is bounded above, which implies that $v*\alpha$ is finite almost everywhere. 
\end{proof}

Combining duality with smoothing of the codomain weight permits smoothing of both weights.

\begin{corollary}\label{cor} Under the hypotheses of the previous theorem, 
\begin{enumerate}[label=(\roman*)]
\item\label{i} If $g$ is integrable and $g/(v*\alpha)\in Y'$, then $\widehat g/u\in X'$ and
\[
\|\widehat g/u\|_{X'}\le C\|g/(v*\alpha)\|_{Y'}.
\]
\item\label{ii} $(1/u)*q_1$ is bounded above, $0<(1/u)*\alpha<\infty$ almost everywhere and, if $g$ is integrable and $g/(v*\alpha)\in Y'$, then $((1/u)*\alpha)\widehat g\in X'$ and
\[
\|((1/u)*\alpha)\widehat g\|_{X'}\le C\|g/(v*\alpha)\|_{Y'}.
\]
\item\label{iii}If $f$ is integrable and $f/((1/u)*\alpha)\in X$, then $(v*\alpha)\widehat f\in Y$ and 
\[
\|(v*\alpha)\widehat f\|_Y\le C\|f/((1/u)*\alpha)\|_X.
\]
\end{enumerate}
\end{corollary}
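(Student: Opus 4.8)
The plan is to obtain the three parts in sequence, alternately applying the duality of Lemma~\ref{duality} and the codomain smoothing of Theorem~\ref{smoothing}, feeding the conclusion of each step into the hypothesis of the next. Part~\ref{i} comes from a single duality step applied to the conclusion of Theorem~\ref{smoothing}; part~\ref{ii} comes from smoothing the resulting dual inequality; and part~\ref{iii} comes from dualizing once more. The substance of the argument is bookkeeping: matching the spaces and weights of the inequality at hand to the roles of $P,Q,U,V$ (in the lemma) or $X,Y,u,v$ (in the theorem), and verifying that the positivity and finiteness hypotheses hold at each reapplication.

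For part~\ref{i} I would invoke Lemma~\ref{duality} with $P=X$, $Q=Y'$ (so that $Q'=Y$), $U=u$ and $V=v*\alpha$. The conclusion of Theorem~\ref{smoothing} is exactly the hypothesis the lemma requires: if $f$ is integrable and $uf\in X$, then $(v*\alpha)\widehat f\in Y$ with $\|(v*\alpha)\widehat f\|_Y\le C\|uf\|_X$. The strict positivity of $U$ and $V$ demanded by the lemma is furnished by Theorem~\ref{smoothing}, which gives $u>0$ and $0<v*\alpha<\infty$ almost everywhere. The lemma then produces precisely the stated bound $\|\widehat g/u\|_{X'}\le C\|g/(v*\alpha)\|_{Y'}$ for integrable $g$ with $g/(v*\alpha)\in Y'$.

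For part~\ref{ii} I would reread the inequality of part~\ref{i} as a weighted Fourier inequality from $Y'$ into $X'$, with domain weight $1/(v*\alpha)$ and codomain weight $1/u$. This fits the hypothesis of Theorem~\ref{smoothing} with $X,Y,u,v$ replaced by $Y',X',1/(v*\alpha),1/u$ and the same $\alpha$. Both $Y'$ and $X'$ are rearrangement invariant, and $1/(v*\alpha)$ is non-negative and measurable; the one condition that needs checking is that the new codomain weight $1/u$ is not almost everywhere zero, which holds because $u$ is finite almost everywhere and $u>0$ almost everywhere, so $1/u>0$ almost everywhere. Theorem~\ref{smoothing} then delivers all of part~\ref{ii} simultaneously: $(1/u)*q_1$ is bounded above, $0<(1/u)*\alpha<\infty$ almost everywhere, and the smoothed inequality $\|((1/u)*\alpha)\widehat g\|_{X'}\le C\|g/(v*\alpha)\|_{Y'}$.

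For part~\ref{iii} I would dualize the inequality of part~\ref{ii} with Lemma~\ref{duality} once more, now taking $P=Y'$, $Q=X$ (so $Q'=X'$), $U=1/(v*\alpha)$ and $V=(1/u)*\alpha$. The inequality of part~\ref{ii} is the required hypothesis, and the strict positivity of $U$ and $V$ follows from $0<v*\alpha<\infty$ and $0<(1/u)*\alpha<\infty$ almost everywhere. Since $(Y')'=Y$ with identical norm, the lemma's conclusion $\|\widehat f/U\|_{P'}\le C\|f/V\|_Q$ reads $\|(v*\alpha)\widehat f\|_Y\le C\|f/((1/u)*\alpha)\|_X$ for integrable $f$ with $f/((1/u)*\alpha)\in X$, which is exactly part~\ref{iii}. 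I expect the only real point to get right to be this bookkeeping, namely the correct assignment of each weight to its $U,V$ (or $u,v$) slot and the confirmation that the positivity and finiteness already established keep every reciprocal weight (especially $1/u$) admissible; no new estimate is needed.
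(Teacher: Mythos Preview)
Your proposal is correct and follows exactly the paper's approach: the paper's proof is the three-line statement ``For \ref{i}, apply Lemma~\ref{duality} to the result of Theorem~\ref{smoothing}. For \ref{ii}, apply Theorem~\ref{smoothing} to \ref{i}. For \ref{iii}, apply Lemma~\ref{duality} to \ref{ii}.'' Your version simply spells out the bookkeeping of which spaces and weights occupy which slots, and checks the positivity and finiteness needed at each step; no new idea is involved.
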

\begin{proof} For \ref{i}, apply Lemma \ref{duality} to the result of Theorem \ref{smoothing}. For \ref{ii}, apply Theorem \ref{smoothing} to \ref{i}. For \ref{iii}, apply Lemma \ref{duality} to \ref{ii}.
\end{proof}

Next we select a sequence of elements of $\mathcal A$ that can be used as an approximate identity on the original weights.

\begin{lemma}\label{approxid} There exist $\alpha_1,\alpha_2,\dots$ in $\mathcal A$ such that if $w\ge0$ and $w*q_1$ is bounded above, then $w*\alpha_k$ is continuous for each $k$ and as $k\to\infty$, $w*\alpha_k\to w$ almost everywhere on $\mathbb R^n$.
\end{lemma}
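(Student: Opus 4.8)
The plan is to fix, once and for all, a single strictly positive, continuous, radially decreasing probability density $\phi$ on $\mathbb R^n$ with super-polynomial decay---for instance the Gaussian $\phi(x)=e^{-\pi|x|^2}$---and to set $\alpha_k(x)=k^n\phi(kx)$. Each $\alpha_k$ is strictly positive, continuous, and integrates to $1$, so $\alpha_k\in\mathcal A$, and, crucially, the sequence is chosen independently of $w$. The two assertions to establish are the continuity of $w*\alpha_k$ for each fixed $k$ and the convergence $w*\alpha_k\to w$ almost everywhere.

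First I would translate the hypothesis into a growth condition. Since $w*q_1(x)=\int_{x+Q_1}w$, the boundedness of $w*q_1$ by some constant $M$ says precisely that the integral of $w$ over every unit cube is at most $M$; in particular $w\in L^1_{\mathrm{loc}}$. Covering $Q_R$ by $O(R^n)$ unit cubes then yields $\int_{Q_R}w\le CMR^n$. This at-most-polynomial growth, paired with the faster-than-polynomial decay of $\phi$, guarantees that $w*\alpha_k(x)$ is a convergent integral for every $x$ and is what makes all far-field contributions negligible.

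For the continuity of $w*\alpha_k$ I would write $w*\alpha_k(x)=\int w(t)\alpha_k(x-t)\,dt$, decompose $\mathbb R^n$ into unit cubes, and, for $x$ ranging over a fixed bounded set, bound $\alpha_k(x-t)$ cube-by-cube by the supremum of $\alpha_k$ over a slightly enlarged cube. The uniform estimate $\int_{\text{cube}}w\le M$ on each cube times the rapidly decaying envelope of $\alpha_k$ produces a summable majorant independent of $x$. The continuity of $\alpha_k$ and the dominated convergence theorem then give continuity of $w*\alpha_k$ as $x$ varies.

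The almost-everywhere convergence is the crux, and I would handle the merely-local integrability of $w$ by localizing. Fixing $x_0$ and a large $R$, split $w=w_1+w_2$ with $w_1=w\chi_{B(x_0,R)}\in L^1$ and $w_2=w\chi_{B(x_0,R)^c}$. The classical approximate-identity theorem for the $L^1$ function $w_1$ and the radially decreasing integrable density $\phi$ gives $w_1*\alpha_k\to w_1$ almost everywhere. Since $w_2$ vanishes on $B(x_0,R)$, for $x$ in $B(x_0,R/2)$ the integral $w_2*\alpha_k(x)$ runs only over $|t|$ large, where the growth bound $\int_{Q_\rho}w=O(\rho^n)$ against the concentrating mass of $\alpha_k$ forces $w_2*\alpha_k(x)\to0$; thus $w*\alpha_k\to w$ almost everywhere on $B(x_0,R/2)$, and letting $x_0$ and $R$ vary covers $\mathbb R^n$. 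The main obstacle is exactly this tail estimate: the standard theorems assume $w\in L^p$, whereas here $w$ is only locally integrable with a uniform cube bound, so I must verify by hand that the far part vanishes, using the polynomial growth of $w$ and the faster-than-polynomial decay of the scaled Gaussian.
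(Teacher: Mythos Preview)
Your argument is correct, but the paper takes a different and somewhat more elementary route. You use the Gaussian mollifiers $\alpha_k(x)=k^n e^{-\pi k^2|x|^2}$, invoke the classical approximate-identity theorem for the compactly supported piece $w_1\in L^1$, and handle the tail $w_2*\alpha_k$ by playing the polynomial growth $\int_{Q_R}w=O(R^n)$ against the super-polynomial decay of the scaled Gaussian. The paper instead builds the sequence directly out of the hypothesis: fixing any $\alpha_0\in\mathcal A$, it sets
\[
\alpha_k=\tfrac1k\,q_1*\alpha_0+\tfrac{k-1}{k}\,k^n q_{1/k}.
\]
The first summand is present only to force strict positivity; the second, being the normalized indicator of a shrinking cube, does the work. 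With this choice the almost-everywhere convergence is immediate from the Lebesgue differentiation theorem applied to the locally integrable $w$ (giving $k^n w*q_{1/k}\to w$ a.e.), together with the trivial fact that $\tfrac1k\,w*q_1*\alpha_0\to0$ because $w*q_1$ is bounded. No localization, no maximal-function machinery, and no tail estimate are needed, since $q_{1/k}$ has compact support. Continuity likewise reduces to the $L^1$-continuity of translation (for $w*q_1*\alpha_0$) and a compactly supported dominated-convergence argument (for $w*q_{1/k}$). Your approach is the more standard one and would adapt to any radially decreasing approximate identity, at the price of the extra tail analysis you correctly flag; the paper's construction is tailor-made to exploit the specific hypothesis that $w*q_1$ is bounded, and thereby avoids that analysis entirely.
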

\begin{proof} Fix $\alpha_0\in\mathcal A$. For each positive integer $k$, set
\[
\alpha_k=\frac1kq_1*\alpha_0+\frac{k-1}kk^nq_{1/k}.
\]
We readily verify that $\alpha_k\in\mathcal A$ for each $k$. 

Since $w*q_1$ is bounded above, $w*q_1*\alpha_0$ is bounded above and $w$ is locally integrable. Lebesgue's differentiation theorem shows that for almost every $z\in\mathbb R^n$, $k^nw*q_{1/k}(z)\to w(z)$ as $k\to\infty$. It follows that, as $k\to\infty$, $w*\alpha_k\to w$ pointwise almost everywhere. 

Now fix $k$ and $z\in\mathbb R^n$. Let $B$ be an upper bound for $w*q_1$. Then for $h\in\mathbb R^n$, 
\[
|w*q_1*\alpha_0(z+h)-w*q_1*\alpha_0(z)|
\le B\int_{\mathbb R^n}|\alpha_0(z+h-y)-\alpha_0(z-y)|\,dy\to0
\]
as $h\to0$ because translation is continuous in $L^1$. So $w*q_1*\alpha_0$ is continuous at $z$.

If $h\in Q_1$ and $y\notin Q_2$, then $y+h\notin Q_1$ and we have $q_{1/k}(y+h)=q_{1/k}(y)=0$. So for sufficiently small $h$,
\[
|w*q_{1/k}(z+h)-w*q_{1/k}(z)|\le\int_{Q_2}w(z-y)|q_{1/k}(y+h)-q_{1/k}(y)|\,dy.
\]
Since $w$ is locally integrable and $|q_{1/k}(y+h)-q_{1/k}(y)|\to0$ almost everywhere as $h\to0$, the dominated convergence theorem shows $w*q_{1/k}$ is continuous at $z$. 

These combine to show that $w*\alpha_k$ is continuous on $\mathbb R^n$.
\end{proof}

Now we are ready to prove our main result: The Fourier transform is bounded on a non-trivial weighted rearrangement invariant space only if the weight is equivalent to a constant function.  
\begin{theorem}\label{noweight} Let $X$ be a rearrangement invariant space of complex valued functions on $\mathbb R^n$ and let $w$ be a non-negative measurable function on $\mathbb R^n$ that is not almost everywhere zero. Suppose there exists a $C>0$ such that, if $f$ is integrable and $wf\in X$, then $w\widehat f\in X$ and 
\[
\|w\widehat f\|_X\le C\|wf\|_X.
\] 
Then there exist positive real numbers $m$ and $M$ such that $m\le w(x)\le M$ for almost every $x\in\mathbb R^n$. Moreover, $X=L^2$ with equivalent norms.
\end{theorem}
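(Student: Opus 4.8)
The plan is to first show that $w$ is bounded above and below, and then to reduce to the unweighted result of \cite{BC}. Applying Theorem \ref{smoothing} and Corollary \ref{cor} with $u=v=w$ and $X=Y$ gives that $w>0$ almost everywhere and that both $w*q_1$ and $(1/w)*q_1$ are bounded above. Fixing a sequence $\alpha_1,\alpha_2,\dots$ as in Lemma \ref{approxid}, I set $W=w*\alpha_k$ and $V=(1/w)*\alpha_k$. By Lemma \ref{approxid} these are continuous and strictly positive, $W\to w$ almost everywhere as $k\to\infty$, and Cauchy--Schwarz applied to $\int(w(x-y))^{1/2}(w(x-y))^{-1/2}\alpha_k(y)\,dy=1$ gives the pointwise bound $WV\ge1$. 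The two inequalities I will exploit are Corollary \ref{cor}(iii), namely $\|W\widehat f\|_X\le C\|f/V\|_X$, and Corollary \ref{cor}(ii), namely $\|V\widehat g\|_{X'}\le C\|g/W\|_{X'}$, both carrying the single constant $C$, independent of $k$.

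Next I would test these against the functions $\varepsilon_{-c}\tau_zq_r$, whose modulus is $\chi_{Q_r-z}$ and whose Fourier transform is bounded below on $Q_{1/(2r)}+c$ via \eqref{test}. For the first inequality this yields $\frac{r^n}{2^n}\|W\chi_{Q_{1/(2r)}+c}\|_X\le C\|\chi_{Q_r-z}/V\|_X$, and for the second, evaluated at the reciprocal scale $r\mapsto 1/(2r)$ and with the cube centres aligned, $\frac{1}{4^nr^n}\|V\chi_{Q_r-z}\|_{X'}\le C\|\chi_{Q_{1/(2r)}+c}/W\|_{X'}$. Multiplying these and using $1/V\le W$ and $1/W\le V$ (from $WV\ge1$) to pass to a single weight, I obtain, writing $\Theta(E)=\|W\chi_E\|_X/\|(1/W)\chi_E\|_{X'}$, the estimate $\Theta(Q_{1/(2r)}+c)\le 8^nC^2\,\Theta(Q_r-z)$ for all $c,z,r$. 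Running the same pairing with the two scales exchanged and chaining the two estimates shows that, for each fixed side length $s$, the quantity $\Theta(E)$ is independent of the location of the cube $E$ of side $s$, up to the factor $(8^nC^2)^2$.

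To turn this location homogeneity into a pointwise bound I would sandwich $\Theta$. Since $W\ge0$ and, by the translation invariance \ref{trans}, $\|\chi_E\|_X=\|q_s\|_X$ and $\|\chi_E\|_{X'}=\|q_s\|_{X'}$ depend only on the side $s$, the ideal property gives $(\min_{\overline E}W)^2\,\frac{\|q_s\|_X}{\|q_s\|_{X'}}\le\Theta(E)\le(\max_{\overline E}W)^2\,\frac{\|q_s\|_X}{\|q_s\|_{X'}}$ for every cube $E$ of side $s$. Comparing $Q_s+x_1$ with $Q_s+x_2$ the common factor $\|q_s\|_X/\|q_s\|_{X'}$ cancels, and letting $s\to0$ and invoking the continuity of $W$ yields $W(x_1)\le 8^nC^2W(x_2)$ for all $x_1,x_2$. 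Passing $k\to\infty$ gives $w(x_1)\le 8^nC^2w(x_2)$ for almost every $x_1,x_2$, hence $\operatorname{ess\,sup}w\le 8^nC^2\operatorname{ess\,inf}w$; since $w>0$ almost everywhere and is not almost everywhere zero, this forces $0<m\le w\le M<\infty$. With $w$ trapped between $m$ and $M$, the hypothesis reads $m\|\widehat f\|_X\le\|w\widehat f\|_X\le C\|wf\|_X\le CM\|f\|_X$, so the Fourier transform is bounded on the unweighted space $X$, and Theorem 1(ii) of \cite{BC} identifies $X=L^2$ with equivalent norms.

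The main obstacle is precisely the step the uncertainty principle appears to forbid: one cannot localize $f$ and $\widehat f$ simultaneously, so no single test function reads off $w$ at a point, and any attempt to send one cube to zero forces the conjugate cube to infinity, where the weight is uncontrolled. The device that breaks this deadlock is the self-dual pairing of the inequality with its associate at reciprocal scales, which cancels the unknown fundamental-function factors $\|q_s\|_X$ and $\|q_s\|_{X'}$ and isolates the scale-balanced, location-homogeneous ratio $\Theta$; the continuity furnished by the mollifiers $\alpha_k$ is then what legitimately converts the homogeneity of $\Theta$ into pointwise control of $w$.
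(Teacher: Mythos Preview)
Your proof is correct and follows essentially the same approach as the paper: both apply Theorem~\ref{smoothing}, Corollary~\ref{cor}, and Lemma~\ref{approxid}, exploit $WV\ge1$, test with translated cubes via \eqref{test}, let $r\to0$ using the continuity of the mollified weights, then let $k\to\infty$, and finish with Theorem~1(ii) of \cite{BC}. The only organizational difference is that the paper uses the fundamental-function identity $r^n=\|q_r\|_X\|q_r\|_{X'}$ (property~\ref{fundfn}) to reach the pointwise inequality $1\le 2^{3n}C^2\,w(z)/w(y)$ in a single step, whereas you introduce the ratio $\Theta$ and chain once through the reciprocal scale, arriving at the same constant.
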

\begin{proof} By Theorem \ref{smoothing} and Corollary \ref{cor} we get: $w>0$ almost everywhere; for each $\alpha\in\mathcal A$,  $0<w*\alpha<\infty$ and $0<(1/w)*\alpha<\infty$ almost everywhere; $w*q_1$ and $(1/w)*q_1$ are  bounded above; if $g$ is integrable and $g/(w*\alpha)\in X'$, then $((1/w)*\alpha)\widehat g\in X'$ and
\[
\|((1/w)*\alpha)\widehat g\|_{X'}\le C\|g/(w*\alpha)\|_{X'};
\]
and if $f$ is integrable and $f/((1/w)*\alpha)\in X$, then $(w*\alpha)\widehat f\in X$ and 
\[
\|(w*\alpha)\widehat f\|_X\le C\|f/((1/w)*\alpha)\|_X.
\]

Since $s+1/s\ge 2$ for $s>0$, we see that, for all $x\in\mathbb R^n$,
\[
\int_{\mathbb R^n}\int_{\mathbb R^n}\Big(\frac{w(x-z)}{w(x-y)}+\frac{w(x-y)}{w(x-z)}\Big)\alpha(y)\alpha(z)\,dy\,dz\ge\int_{\mathbb R^n}\int_{\mathbb R^n}2\alpha(y)\alpha(z)\,dy\,dz=2,
\]
which implies $1\le w*\alpha(x)(1/w)*\alpha(x)$. Therefore,
\[
\frac1{(2r)^n}\le\int_{Q_{1/(2r)}}w*\alpha(x)(1/w)*\alpha(x)\,dx
\le\|(w*\alpha)q_{1/(2r)}\|_X\|((1/w)*\alpha)q_{1/(2r)}\|_{X'}.
\]
But for any $y,z\in\mathbb R^n$, inequality \eqref{test} implies
\[
\|(w*\alpha)q_{1/(2r)}\|_X
\le\frac{2^n}{r^n}\|(w*\alpha)(\tau_zq_r)\,\widehat{}\,\|_X
\le\frac{2^nC}{r^n}\|\tau_zq_r/((1/w)*\alpha)\|_X
\]
and
\[
\|((1/w)*\alpha)q_{1/(2r)}\|_{X'}
\le\frac{2^n}{r^n}\|((1/w)*\alpha)(\tau_yq_r)\,\widehat{}\,\|_{X'}
\le\frac{2^nC}{r^n}\|\tau_yq_r/(w*\alpha)\|_{X'}.
\]
These, together with properties \ref{fundfn} and \ref{trans} above, yield
\[
1\le2^{3n}C^2\frac{\|\tau_zq_r/((1/w)*\alpha)\|_X}{\|\tau_zq_r\|_X}\frac{\|\tau_yq_r/(w*\alpha)\|_{X'}}{\|\tau_yq_r\|_{X'}}.
\]
This inequality holds for all $\alpha\in\mathcal A$ so it holds with $\alpha$ replaced by each $\alpha_k$ from the sequence given in Lemma \ref{approxid}. The lemma applies to both $w$ and $1/w$. Using the continuity of $(1/w)*\alpha_k$ and $w*\alpha_k$, and letting $r\to0$, we get
\[
1\le2^{3n}C^2\frac1{(1/w)*\alpha_k(z)}\frac1{w*\alpha_k(y)}.
\]
Now we let $k\to\infty$ to see that for almost every $z$ and almost every $y$ we have
\[
1\le2^{3n}C^2\frac{w(z)}{w(y)}.
\]
Choose $y_0$ such that the inequality holds for almost every $z$ and choose $z_0$ such that the inequality holds for almost every $y$. Then set $m=2^{-3n}C^{-2}w(y_0)$ and $M=2^{3n}C^2w(z_0)$ to get $m\le w\le M$ almost everywhere.

With this inequality in hand, the hypothesis of the theorem implies that if $f$ is integrable and $f\in X$, then $\widehat f\in X$ and $\|\widehat f\|_X\le (MC/m)\|f\|_X$. Since $X$ is rearrangement invariant, the Fourier transform extends to be bounded on all of $X$ and Theorem 1(ii) of \cite{BC} implies that $X=L^2$, with equivalent norms.
\end{proof}

Now we turn our attention to weighted Lebesgue spaces and the Hausdorff-Young inequality. It is well known that the $L^p$ spaces are rearrangement invariant, and \cite{BS}*{II.2.5} shows that $(L^p)'=L^{p'}$, with identical norms, when $1\le p\le\infty$ and $1/p+1/p'=1$. See \cite{BS}*{II.2.5}. Note that for $L^\infty$, the Banach space dual is dramatically different than the associate space, which is just $L^1$.

\begin{theorem}\label{onlyHY} Suppose $p,q\in[1,\infty]$, $w$ is a positive, measurable function on $\mathbb R^n$ and there exists a $C$ such that $\|w\widehat f\|_{L^q}\le C\|wf\|_{L^p}$
whenever $f$ is integrable and $fw\in L^p$. Then $1\le p\le 2$, $q=p'$ and there exist positive real numbers $m$ and $M$ such that $m\le w(x)\le M$ for almost every $x\in\mathbb R^n$.
\end{theorem}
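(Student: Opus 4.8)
The plan is to run the smoothing machinery exactly as for Theorem \ref{noweight}, but to extract the exponent relations $1\le p\le2$ and $q=p'$ \emph{before} attempting to bound the weight. Since $L^p$ and $L^q$ are rearrangement invariant and $w>0$, Theorem \ref{smoothing} and Corollary \ref{cor} apply with $X=L^p$, $Y=L^q$ and $u=v=w$. For every $\alpha\in\mathcal A$ they give the two smoothed inequalities
\[
\|(w*\alpha)\widehat f\|_{L^q}\le C\|f/((1/w)*\alpha)\|_{L^p}\quad\text{and}\quad\|((1/w)*\alpha)\widehat g\|_{L^{p'}}\le C\|g/(w*\alpha)\|_{L^{q'}},
\]
together with the facts that $w>0$ almost everywhere and that $w*q_1$ and $(1/w)*q_1$ are bounded above, say by $B$ and $B'$. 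All of this is available for arbitrary $p,q\in[1,\infty]$.

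The key observation I would use is that a single member $\alpha_k$ of the sequence from Lemma \ref{approxid} already traps the smoothed weights between positive constants. Writing $\alpha_k=\frac1kq_1*\alpha_0+\frac{k-1}kk^nq_{1/k}$ and using $q_{1/k}\le q_1$, one checks $w*\alpha_k\le k^nB$ and $(1/w)*\alpha_k\le k^nB'$. On the other hand, the elementary inequality $s+1/s\ge2$ gives $(w*\alpha_k)((1/w)*\alpha_k)\ge1$ pointwise, so each of $w*\alpha_k$ and $(1/w)*\alpha_k$ is also bounded below by a positive constant $a_k$. Fixing $k$ and feeding the bounds $w*\alpha_k\ge a_k$ and $(1/w)*\alpha_k\ge a_k$ into the first smoothed inequality lets me strip both weights: for every integrable $f\in L^p$ I obtain $\|\widehat f\|_{L^q}\le (C/a_k^2)\|f\|_{L^p}$. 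This is an unweighted Fourier inequality on $L^1\cap L^p$, so by the well-known characterization of the admissible $(L^p,L^q)$ pairs recalled in the introduction I conclude $1\le p\le2$ and $q=p'$. (The relation $q=p'$ may also be seen directly by applying the stripped inequality to dilates $f(\lambda\,\cdot)$ and letting $\lambda\to0$ and $\lambda\to\infty$.)

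It remains to bound $w$ itself, and for this I would reproduce the scale-critical argument of Theorem \ref{noweight}, now with $X=L^p$ and $Y=L^q=L^{p'}$. Using the fundamental functions $\|q_r\|_{L^p}=r^{n/p}$ and the test function estimate \eqref{test} in the two smoothed inequalities gives, for all $y,z$,
\[
\|(w*\alpha)q_{1/(2r)}\|_{L^q}\le\frac{2^nC}{r^n}\|\tau_zq_r/((1/w)*\alpha)\|_{L^p},\quad\|((1/w)*\alpha)q_{1/(2r)}\|_{L^{p'}}\le\frac{2^nC}{r^n}\|\tau_yq_r/(w*\alpha)\|_{L^{q'}}.
\]
Since $p\le2\le p'=q$, the exponents satisfy $1/q+1/p'\le1$, so Hölder's inequality on the cube $Q_{1/(2r)}$ combined with the pointwise bound $(w*\alpha)((1/w)*\alpha)\ge1$ lower-bounds the product of the two left-hand sides by a constant multiple of $(2r)^{-n(1/q+1/p')}$. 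Multiplying the two displayed inequalities and cancelling the now exactly matching powers of $r$, then replacing $\alpha$ by $\alpha_k$, letting $r\to0$ using the continuity of $w*\alpha_k$ and $(1/w)*\alpha_k$, and finally letting $k\to\infty$ along the approximate identity of Lemma \ref{approxid}, yields a bound $w(z)/w(y)\le K$ with $K$ independent of $y,z$. Choosing suitable $y_0,z_0$ then produces $m\le w\le M$.

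The main obstacle is the apparent circularity: the scale-critical weight-bounding step only closes when $1/q+1/p'\le1$, that is, when $p\le q$, yet $p\le q$ is part of what is to be proved. The device that breaks the circle is the second paragraph—the remark that, for each fixed $k$, both $w*\alpha_k$ and $(1/w)*\alpha_k$ lie between positive constants—which reduces the exponent question to the unweighted case and delivers $q=p'$ and $p\le2$ first; only afterwards, with $p\le q$ in hand, do I run the weight-bounding step. The one point requiring care is to confirm that the constant $K$ emerging from the final step is independent of $k$, so that the limit $k\to\infty$ survives and actually bounds $w$.
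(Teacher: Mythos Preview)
Your argument is correct, but the route differs from the paper's in one essential respect: the order in which the two conclusions are obtained.

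You first fix a single $\alpha_k$ and observe that, since $w*q_1$ and $(1/w)*q_1$ are bounded and $(w*\alpha_k)((1/w)*\alpha_k)\ge1$, both smoothed weights are trapped between positive constants depending on $k$. This lets you strip the weights from the smoothed Fourier inequality and read off $q=p'$ and $p\le2$ from the unweighted case; only then, with $1/q+1/p'\le1$ in hand, do you run the scale-critical argument using the ``naive'' H\"older step $\|AB\|_{L^s}\le\|A\|_{L^q}\|B\|_{L^{p'}}$ for $1/s=1/q+1/p'$ to bound $w$. The paper does the opposite: it bounds $w$ \emph{first}, for arbitrary $p,q$, by using a different H\"older decomposition---writing the integrand as $\big((w*\alpha)^q\big)^{p'/(p'+q)}\big(((1/w)*\alpha)^{p'}\big)^{q/(p'+q)}$ and applying H\"older with conjugate exponents $(p'+q)/p'$ and $(p'+q)/q$---which yields the lower bound $(2r)^{-n(1/p'+1/q)}$ with no restriction on $p,q$. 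The powers of $r$ then cancel identically (since $1/p+1/q'+1/p'+1/q=2$), the weight bound follows, and only afterwards is the problem reduced to the unweighted case to obtain $q=p'$ and $p\le2$.

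So the circularity you identified is real for your form of H\"older but is simply absent in the paper's version. Your detour via the two-sided bound on $w*\alpha_k$ is a nice self-contained device that buys you the simpler H\"older step; the paper's approach is more direct and shows, as a byproduct, that the weight bound holds without any a priori relation between $p$ and $q$. Your final concern is harmless: the constant coming out of the scale-critical step is $2^{n(2+1/p'+1/q)}C^2$, which does not depend on $k$.
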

\begin{proof} By Theorem \ref{smoothing} and Corollary \ref{cor} we get: $0<w$ almost everywhere; for each $\alpha\in\mathcal A$, $0<w*\alpha<\infty$ and $0<(1/w)*\alpha<\infty$ almost everywhere; $w*q_1$ and $(1/w)*q_1$ are  bounded above; if $g$ is integrable and $g/(w*\alpha)\in L^{q'}$, then $((1/w)*\alpha)\widehat g\in L^{p'}$ and
\[
\|((1/w)*\alpha)\widehat g\|_{L^{p'}}\le C\|g/(w*\alpha)\|_{L^{q'}};
\]
and if $f$ is integrable and $f/((1/w)*\alpha)\in L^p$, then $(w*\alpha)\widehat f\in L^q$ and 
\[
\|(w*\alpha)\widehat f\|_{L^q}\le C\|f/((1/w)*\alpha)\|_{L^p}.
\]

As in the proof of Theorem \ref{noweight}, we see that $1\le w*\alpha(x)(1/w)*\alpha(x)$ for all $x\in\mathbb R^n$. If both $p'$ and $q$ are finite, this implies
\[
\bigg(\frac1{(2r)^n}\bigg)^{\frac1{p'}+\frac1q}
\le\bigg(\int_{Q_{1/(2r)}}\big(w*\alpha(x)^q\big)^{p'/(p'+q)}\big((1/w)*\alpha(x)^{p'}\big)^{q/(p'+q)}\,dx\bigg)^{\frac1{p'}+\frac1q}.
\]
Applying H\"older's inequality with indices $(p'+q)/p'$ and $(p'+q)/q$ we get
\[
(2r)^{-\frac n{p'}-\frac nq}
\le\|(w*\alpha)q_{1/(2r)}\|_{L^q}\|((1/w)*\alpha)q_{1/(2r)}\|_{L^{p'}}.
\]
It is easy to verify that this inequality remains valid when one or both of $p'$ and $q$ is infinite.

But for any $y,z\in\mathbb R^n$, inequality \eqref{test} implies
\[
\|(w*\alpha)q_{1/(2r)}\|_{L^q}
\le\frac{2^n}{r^n}\|(w*\alpha)(\tau_zq_r)\,\widehat{}\,\|_{L^q}
\le\frac{2^nC}{r^n}\|\tau_zq_r/((1/w)*\alpha)\|_{L^p}
\]
and
\[
\|((1/w)*\alpha)q_{1/(2r)}\|_{L^{p'}}
\le\frac{2^n}{r^n}\|((1/w)*\alpha)(\tau_yq_r)\,\widehat{}\,\|_{L^{p'}}
\le\frac{2^nC}{r^n}\|\tau_yq_r/(w*\alpha)\|_{L^{q'}}.
\]
Since $\|\tau_zq_r\|_{L^p}=r^{n/p}$ and $\|\tau_yq_r\|_{L^{q'}}=r^{n/q'}$, the above inequalities combine to show that
\[
1\le2^{n(2+\frac1{p'}+\frac1q)}C^2\frac{\|\tau_zq_r/((1/w)*\alpha)\|_{L^p}}{\|\tau_zq_r\|_{L^p}}\frac{\|\tau_yq_r/(w*\alpha)\|_{L^{q'}}}{\|\tau_yq_r\|_{L^{q'}}}.
\]
As in the proof of Theorem \ref{noweight}, we replace $\alpha$ by $\alpha_k$, let $r\to0$ to get
\[
1\le2^{n(2+\frac1{p'}+\frac1q)}C^2\frac1{(1/w)*\alpha_k(z)}\frac1{w*\alpha_k(y)},
\]
and let $k\to\infty$ to get
\[
1\le2^{n(2+\frac1{p'}+\frac1q)}C^2\frac{w(z)}{w(y)}.
\]
It follows as above that there exist positive real numbers $m$ and $M$ such that $m\le w(x)\le M$ for almost every $x\in\mathbb R^n$. The hypothesis of the theorem now implies that if $f$ is integrable and $f\in L^p$, then $\widehat f\in L^q$ and
\[
\|\widehat f\|_{L^q}\le (MC/m)\|\widehat f\|_{L^p}.
\]
This can only happen when $1\le p=q'\le 2$.

To see this well known fact we may use \eqref{test} with $z=0$, taking $f=q_r$ to see that $q=p'$ is a necessary condition for \eqref{HY}. Also, Theorem 1(i) of \cite{BC} shows that $L^1+L^2$ is the largest rearrangement invariant space which the Fourier Transform maps into a space of locally integrable functions. If $p>2$ then $L^p$ is not a subset of $L^1+L^2$, making $p\le 2$ also a necessary condition for \eqref{HY}.
\end{proof}

\section{The Schr\"odinger Multiplier}

The Fourier transform separates variables in the Schr\"odinger equation
\begin{equation}\label{Schr}
\partial_tu(t,x)=i\Delta_xu(t,x).
\end{equation}
The resulting multiplier is $\exp(-4\pi^2it|y|^2)$. 
That is, if $u(0,x)=h(x)$, with $h$ integrable, the solution to \eqref{Schr} is $u(t,x)=S_th(x)$, where the operator $S_t$ is defined by
\[
\widehat{S_th}(y)=\exp(-4\pi^2it|y|^2)\widehat h(y).
\]
The operator $S_t$ can be extended (along with the Fourier transform) to spaces other than $L^1$ in order to solve the  Schr\"odinger equation for non-integrable initial data. Concrete extensions of $S_t$ to other spaces of functions follow from boundedness of the operator on the integrable functions in the space. So it is natural to investigate the spaces on which $S_t$ is bounded. Lemma 8 of \cite{BC} showed that $S_t$ is not bounded on any rearrangement invariant space of functions unless the space is $L^2$. Here we modify  rearrangement invariant spaces with a weight function and show that a similar negative result holds on this larger class. 

\begin{corollary}\label{noSchr}
Let $X$ be a rearrangement invariant space of complex valued functions on $\mathbb R^n$ and let $t>0$. Fix a non-negative measurable function $w$ and set $w_t(x)=w(x/(4\pi t))$ for $x\in\mathbb R^n$. Suppose there exists a $C>0$ such that if $h$ is integrable and $wh\in X$, then $w_tS_th\in X$ and 
\[
\|w_tS_th\|_X\le C\|wh\|_X.
\]
Then $w$ is bounded above and below, and $X=L^2$ with equivalent norms. 
\end{corollary}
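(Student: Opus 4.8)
The plan is to reduce the hypothesis to a weighted Fourier inequality carrying the single weight $w$ on both sides, and then to invoke Theorem~\ref{noweight}. The bridge is the classical ``chirp'' factorisation of the free Schr\"odinger propagator. Writing $\phi_t(x)=e^{i|x|^2/(4t)}$ for the unimodular chirp, the standard integral kernel of $S_t$ together with the expansion $|x-y|^2=|x|^2-2x\cdot y+|y|^2$ (completing the square) gives, for every integrable $h$ and every $x\in\mathbb R^n$,
\[
S_th(x)=c_t\,\phi_t(x)\,\widehat{g}\!\left(\frac{x}{4\pi t}\right),\qquad g=\phi_t h,
\]
where $c_t\ne0$ is a constant depending only on $t$ and $n$. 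The convolution defining $S_th$ converges absolutely because the kernel is bounded, and $g$ is integrable since $|g|=|h|$; conversely every integrable $g$ arises this way, from $h=\overline{\phi_t}\,g$.

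Next I would insert this factorisation into the hypothesis. Since $\phi_t$ is unimodular and $w_t(x)=w(x/(4\pi t))$, the modulus of the left-hand integrand is
\[
|w_t(x)S_th(x)|=|c_t|\,w\!\left(\tfrac{x}{4\pi t}\right)\Big|\widehat g\!\left(\tfrac{x}{4\pi t}\right)\Big|=|c_t|\,(E_{4\pi t}\psi)(x),\qquad \psi:=w\,|\widehat g|,
\]
where $E_s f(\cdot)=f(\cdot/s)$ is the dilation operator. The dilation $E_s$ is an isomorphism of any rearrangement invariant space onto itself: after normalising by a suitable power of $s$ it becomes a contraction on both $L^1$ and $L^\infty$, so the interpolation property (item (e) of the preliminaries) bounds it on $X$, and the same reasoning applies to $E_{1/s}=E_s^{-1}$. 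Consequently, with $s=4\pi t$,
\[
\|w\widehat g\|_X=\|\psi\|_X\le\|E_{1/(4\pi t)}\|_{X\to X}\,\|E_{4\pi t}\psi\|_X=\frac{\|E_{1/(4\pi t)}\|_{X\to X}}{|c_t|}\,\|w_tS_th\|_X.
\]
Combining this with the hypothesis $\|w_tS_th\|_X\le C\|wh\|_X$ and using $\|wh\|_X=\|wg\|_X$ (equal moduli), I obtain a constant $C'$ such that $\|w\widehat g\|_X\le C'\|wg\|_X$ for every integrable $g$ with $wg\in X$; the membership $w\widehat g\in X$ is guaranteed because $\psi=|c_t|^{-1}E_{1/(4\pi t)}|w_tS_th|$ lies in $X$.

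This is precisely the hypothesis of Theorem~\ref{noweight} for the weight $w$. That theorem then yields positive constants $m,M$ with $m\le w\le M$ almost everywhere and $X=L^2$ with equivalent norms, which completes the proof. I expect the main obstacle to be the first step, namely pinning down the chirp factorisation rigorously, since $\phi_t$ is not integrable and its Fourier transform must be read in the Fresnel sense. The cleanest route is to argue directly from the absolutely convergent convolution integral for integrable $h$---as in Lemma~8 of \cite{BC}---rather than through the distributional multiplier, and to verify that $h\mapsto g=\phi_t h$ is a modulus-preserving bijection of the integrable functions, so that no initial data are lost in the reparametrisation and the conditions $wh\in X$ and $wg\in X$ coincide.
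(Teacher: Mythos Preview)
Your proposal is correct and follows essentially the same route as the paper. Both arguments use the chirp factorisation $S_th(y)=(4\pi it)^{-n/2}\sigma(y)\,\widehat{\sigma h}(y/(4\pi t))$ with $\sigma=\phi_t$, observe that multiplication by the unimodular chirp is a modulus-preserving bijection of integrable functions, use boundedness of dilations on the rearrangement invariant space $X$ to strip off the scaling, and then apply Theorem~\ref{noweight} to the resulting inequality $\|w\widehat g\|_X\le C'\|wg\|_X$; the only differences are notational.
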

\begin{proof} Since $X$ is rearrangement invariant, the dilation map $f(x)\to f(x/(4\pi t))$ is bounded on $X$. (It is trivially bounded on $L^\infty$ and a simple change of variable shows that it is bounded on $L^1$.) Let $M$ be a bound for this map. 

A Fourier transform calculation shows that the operator $S_t$ can be written as the convolution 
\[
S_th(y)=(4\pi it)^{-n/2}\int_{\mathbb R^n}\exp\bigg(i\frac{|y-x|^2}{4t}\bigg)h(x)\,dx
\]
and the simplification in Lemma 8 of \cite{BC}, taking $\sigma(x)=\exp(i|x|^2/(4t))$, gives
\[
S_th(y)=(4\pi it)^{-n/2}\sigma(y)\widehat{\sigma h}(y/(4\pi t)).
\]

Suppose $f$ is integrable and $wf\in X$. Set $h=f/\sigma$. Since $|\sigma(x)|=1$ for all $x$, $h$ is integrable and $wh\in X$. Thus $\|w_tS_th\|_X\le C\|wh\|_X$. The calculation above shows that
\[
w(y/(4\pi t))|\widehat f(y/(4\pi t))|=(4\pi t)^{n/2}w_t(y)|S_th(y)|.
\]
Since $w_tS_th\in X$ and $X$ is closed under dilations, $w\widehat f\in X$. Moreover,
\[
\|w\widehat f\|_X\le M(4\pi t)^{n/2}\|w_tS_th\|_X\le M(4\pi t)^{n/2}C\|wh\|_X=M(4\pi t)^{n/2}C\|wf\|_X.
\]
Now we apply Theorem \ref{noweight} to see that $w$ is bounded above and below, and $X=L^2$ with equivalent norms.
\end{proof}

We state the next corollary without proof, as it follows from Theorem \ref{onlyHY} in the same way as Corollary \ref{noSchr} follows from Theorem \ref{noweight}.
\begin{corollary}\label{noHYSchr} Let $p,q\in[1,\infty]$ and let $t>0$. Fix a non-negative weight function $w$ and set $w_t(x)=w(x/(4\pi t))$ for all $x\in\mathbb R^n$. Suppose there exists a $C$ such that if $h$ is integrable and $wh\in L^p$, then $w_tS_th\in L^q$ and 
\[
\|w_tS_th\|_{L^q}\le C\|wh\|_{L^p}.
\]
Then $w$ is bounded above and below, $1\le p\le2$ and $q=p'$. 
\end{corollary}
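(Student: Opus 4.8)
The plan is to follow the proof of Corollary \ref{noSchr} almost verbatim, reducing the present hypothesis to the weighted Fourier inequality required by Theorem \ref{onlyHY}. The only substantive change is bookkeeping for the dilation $y\mapsto f(y/(4\pi t))$, which now acts between two possibly different Lebesgue spaces and must be scaled explicitly rather than absorbed into a single rearrangement-invariant bound.

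First I would recall the simplification of $S_t$ used there: with $\sigma(x)=\exp(i|x|^2/(4t))$,
\[
S_th(y)=(4\pi it)^{-n/2}\sigma(y)\,\widehat{\sigma h}(y/(4\pi t)).
\]
Given an integrable $f$ with $wf\in L^p$, set $h=f/\sigma$. Since $|\sigma|\equiv1$, $h$ is integrable, $\sigma h=f$, and $\|wh\|_{L^p}=\|wf\|_{L^p}$, so the hypothesis applies and $\|w_tS_th\|_{L^q}\le C\|wf\|_{L^p}$. Because $\widehat{\sigma h}=\widehat f$ and $w_t(y)=w(y/(4\pi t))$, taking absolute values gives the pointwise identity
\[
w(y/(4\pi t))\,|\widehat f(y/(4\pi t))|=(4\pi t)^{n/2}\,|w_t(y)S_th(y)|,
\]
exactly as in the proof of Corollary \ref{noSchr}. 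Note that the domain requires no dilation here: $wf\in L^p$ directly, so only the codomain norm will be affected.

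The key step is then the dilation scaling on $L^q$. The left-hand side above is the dilation $y\mapsto(w\widehat f)(y/(4\pi t))$ of $w\widehat f$, and a change of variables gives $\big\|(w\widehat f)(\cdot/(4\pi t))\big\|_{L^q}=(4\pi t)^{n/q}\|w\widehat f\|_{L^q}$, with the usual convention $n/\infty=0$ covering the case $q=\infty$, where dilation is an $L^\infty$ isometry. Combining this with the displayed identity and the hypothesis yields
\[
\|w\widehat f\|_{L^q}=(4\pi t)^{n/2-n/q}\|w_tS_th\|_{L^q}\le(4\pi t)^{n/2-n/q}C\,\|wf\|_{L^p}.
\]
Thus the weighted Fourier inequality $\|w\widehat f\|_{L^q}\le C'\|wf\|_{L^p}$ holds with $C'=(4\pi t)^{n/2-n/q}C$ for every integrable $f$ with $wf\in L^p$.

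Finally I would invoke Theorem \ref{onlyHY} with this constant $C'$ to conclude $1\le p\le2$, $q=p'$, and the existence of $m,M>0$ with $m\le w\le M$ almost everywhere. The only point requiring care—and the sole departure from the argument of Corollary \ref{noSchr}—is that the codomain and domain now carry independent scalings: in the rearrangement-invariant setting a single dilation bound $M$ sufficed, whereas here the explicit factor $(4\pi t)^{n/q}$ must be computed and tracked. I do not expect a genuine obstacle, since all of the analytic content has already been isolated in Theorem \ref{onlyHY}.
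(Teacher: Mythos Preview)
Your proposal is correct and follows precisely the route the paper intends: it explicitly says Corollary \ref{noHYSchr} is proved by repeating the argument of Corollary \ref{noSchr} with Theorem \ref{onlyHY} in place of Theorem \ref{noweight}, and your only modification---computing the explicit dilation factor $(4\pi t)^{n/q}$ on $L^q$ instead of invoking an abstract rearrangement-invariant dilation bound---is exactly the adjustment that change of spaces requires.
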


\begin{bibdiv}
\begin{biblist}
\bib{BS}{book}{
   author={Bennett, C.},
   author={Sharpley, R.},
   title={Interpolation of operators},
   series={Pure and Applied Mathematics},
   volume={129},
   publisher={Academic Press, Inc., Boston, MA},
   date={1988},
   pages={xiv+469},
   isbn={0-12-088730-4},
%   review={\MR{928802}},
}
\bib{BC}{article}{
   author={Brandolini, L.},
   author={Colzani, L.},
   title={Fourier transform, oscillatory multipliers and evolution equations
   in rearrangement invariant function spaces},
   journal={Colloq. Math.},
   volume={71},
   date={1996},
   number={2},
   pages={273--286},
   issn={0010-1354},
%   review={\MR{1414828}},
   doi={10.4064/cm-71-2-273-286},
}
\end{biblist}
\end{bibdiv}
\end{document}